\documentclass{amsart}

\usepackage{amsmath}
\usepackage{amsthm}
\usepackage{amsfonts}
\usepackage{dsfont}
\usepackage{enumerate}
\usepackage{color}
\usepackage{amssymb}
\usepackage{pst-node}
\usepackage{tikz-cd} 
\allowdisplaybreaks

\newenvironment{customtheorem}[1]
  {\innercustomtheorem}
  {\endinnercustomtheorem}
\newcommand\scalemath[2]{\scalebox{#1}{\mbox{\ensuremath{\displaystyle #2}}}}

\newcommand{\R}{\mathbb R}

\newcommand{\C}{\mathbb C}

\newcommand{\Z}{\mathbb Z}
\newcommand{\T}{\mathbb T}





\newtheorem{theorem}{Theorem}[section]
\newtheorem{corollary}[theorem]{Corollary}
\newtheorem{lemma}[theorem]{Lemma}
\newtheorem{proposition}[theorem]{Proposition}

\theoremstyle{definition}
\newtheorem{definition}[theorem]{Definition}
\theoremstyle{remark}
\newtheorem{remark}[theorem]{Remark}
\numberwithin{equation}{section}


\newcommand{\NCS}[1]{S^{#1}_{\theta'}}

\newcommand{\E}{\mathcal E}
\newcommand{\sut}{C(SU(3)_\theta)}
\newcommand{\ch}{\operatorname{ch}}
\newcommand{\sym}{\operatorname{Sym}}
\newcommand{\tr}{\operatorname{tr}}

\title[]{Quantum symmetries of the deformation quantization of $SU(3)$}
\author{Mitsuru Wilson}


\address[Mitsuru Wilson]{%
Edificio H\\
Universidad de los Andes\\
Bogot\'a, Colombia\\
N6A 5B7\\
Canada}
\email{m.wilson@uniandes.edu.co}

\subjclass[2000]{}
\keywords{}

\begin{document}

\begin{abstract}
We prove a criterion of when a coaction of a compact Lie group on an algebra of 
continuous functions on a compact manifold extends to a coaction of deformation quantizations of the Lie group on the deformation quantization of the algebra. We compute an explicit example of a compact quantum group $SU(3)_\theta$, which arises as a deformation 
quantization of the Lie group $SU(3)$ by an action of its maximal torus. Using the criterion, we determine exactly when the action of 
$SU(3)$ on $S^5$ extends to a coaction of $SU(3)_\theta$ on the noncommutative 5-sphere $S^5_{\theta'}$. Furthermore, this 
coaction is shown to be cotransitive. A coaction of $SU(3)_\theta$ on the product of two noncommutative 5-sphere 
$(S^5\times S^5)_{\theta'}$ is constructed and we derive a nontrivial subalgebra of coinvariant elements.
\end{abstract}

\maketitle

\tableofcontents

\section{\bf Introduction}

\noindent
Every field theory of particle physics is based on certain symmetries. QCD, for instance, is a gauge theory of the symmetry group $SU(3)$. 
This gauge group is obtained by taking the colour charge to define a local symmetry. The strong interaction does not discriminate between 
flavours of quarks but $SU(3)$ detects the flavours of quarks as a symmetry. Therefore, studying this group alone offers     a great amount 
of information about symmetries of particle physics.

In this paper, we consider a noncommutative version of the group $SU(3)$ in the framework of isospectral deformation of manifolds introduced 
in \cite{CL2001}, and study its properties as a quantum compact group. In fact, this version of deformation of compact Lie groups to quantum compact groups was first 
introduced by Rieffel in \cite{MR1993.5} using his theory of deformation quantization \cite{MR1993}, which is closely related to the framework 
of \cite{CL2001}. \cite{LvS2005} computed exactly which values $\theta_{ij}$ of $S^7_\theta$ the natural action of the group $SU(2)$ on $S^7$ 
extends to an action of $SU(2)$ on $S^7_\theta$. This is a application of our main result in this paper, Lemma \ref{lemma:action.condition}. 
Moreover, the algebra of invariant elements of this action was shown to be isomorphic to 
a noncommutative 4-sphere $S^4_{\theta'}$. It gives a noncommutative analogue of the Hopf fibration. Although the group $SU(2)$ does not admit a deformation 
in the framework of \cite{MR1993.5}, the construction of Hopf fibration was generalized to the quantum group $SU_q(2)$ \cite{LPR2006}, 
which is a different deformation than our framework.

This paper, therefore, generalizes the construction of noncommutative fibration in \cite{LvS2005} to the $\theta$-deformation $SU(3)_\theta$ of $SU(3)$ to a quantum compact group. 

In Section \ref{sec:deformation}, we review the construction of deformation quantization and relate the noncommutative toric deformation to 
deformation quantization. Section \ref{sec:quantum.group} is devoted to the deformation quantization of quantum compact group, which 
is inferred as noncommutative toric deformation. In Section \ref{sec:SU(3)}, we compute the deformation $\sut$ of the algebra of continuous 
functions on $SU(3)$; we compute the commutation relations that the coordinate functions satisfy. 

In Section \ref{sec:coaction.S5}, we prove Lemma \ref{lemma:action.condition}. This lemma gives the criterion as to when a coaction of a 
group on the algebra of functions extends to the deformation. This lemma is used to endow the noncommutative 5-sphere $S^5_{\theta'}$ with 
a coaction of $SU(3)_\theta$. It is also proved that when the action extends to the coaction of the deformed algebra structures, the coaction 
is cotransitive. Finally in Section \ref{sec:S5S5}, we construct a coaction of $SU(3)_\theta$ with a nontrivial algebra of coinvariant elements. 
We deform the manifold $S^5\times S^5$ to a noncommutative toric manifold $(S^5\times S^5)_{\theta'}$.

\section{\bf Noncommutative toric manifolds and deformation quantization of manifolds}
\label{sec:deformation}

\noindent 
In this section we discuss the construction of noncommutative toric manifolds as a special 
case of Rieffel's deformation quantization along the action of $\R^n$ \cite{MR1993}. The construction of 
noncommutative toric manifolds proceeds as follows \cite{CL2001}.
Given a compact Riemannian manifold $M$ endowed with an action of a $n$-torus $\T^n$, 
$n\geq2$, the algebra $C^\infty(M)$ of smooth functions on $M$ can be decomposed into 
isotypic components $C^\infty(M)_{\vec r}=\big\{f\in C^\infty(M):\alpha_t(f)=e^{2\pi it\cdot\vec r}f\big\}$ indexed by $\vec r\in\widehat{\T^n}=\Z^n$. 
A deformed algebra structure can, then, be given by the linear extension of the product of two functions $f_{\vec r}$ and $g_{\vec s}$ in some 
isotypic components. A new product $\times_J$ on these elements is given by
\begin{equation}
f_r \times g_s = \chi(r,s)f_r g_s,
\label{eq:new.product}
\end{equation}
where $\chi: \Z^n \times \Z^n \rightarrow \T$ is an antisymmetric bicharacter on the Pontryagin dual 
$\Z^n=\widehat\T^n$ of $\T^n$ i.e. $\overline{\chi(s,r)}=\chi(r,s)$. The bicharacter relation
$$
\chi(r,s+t)=\chi(r,s)\chi(r,t),\qquad\chi(r+s,t)=\chi(r,t)\chi(s,t), \qquad s,t\in\Z^n
$$
ensures the associativity of the new product. For instance, 
$$
\chi(r,s) := \exp\left(\pi i~r\cdot\theta s \right)
$$
where $\theta = \left(\theta_{jk}\right)$ is a real skew symmetric $n \times n$ matrix gives a noncommutative 
deformation of the algebra structure. The involution for the new product is given by the complex conjugation 
of the functions. We denote the algebra with the new multiplication $\times_\theta$  by $C(M_\theta).$

The definition of a new product in \eqref{eq:new.product} is a discrete version of the Rieffel's deformation quantization \cite{MR1993} 
by viewing the action as a periodic action of $\R^n$. First of all, Rieffel expressed the deformed product of an algebra endowed with an action 
of $\R^n$ in the integral form
\begin{align}
\label{eq:deform.product}
a \times_J b
= \int_{V\times V} \alpha_{Js}(a) \alpha_t(b) ~e^{2\pi is\cdot t} ~ds~dt
\end{align}
where $a$ and $b$ belong to an algebra $A$ and $J$ is a skew symmetric real $n \times n$ matrix \cite{MR1993}. This integral may be interpreted 
as an oscillatory integral. $\alpha: V \rightarrow$Aut$(A)$ is a strongly continuous action of a finite dimensional vector space $V \cong \R^n$ on 
$A$. The oscillatory integral \eqref{eq:deform.product} makes sense a priori only for elements $a,b$ of the smooth subalgebra $A^\infty$ (which is 
a Fr\'echet algebra) of $A$ under the action $\alpha$ i.e. $v\mapsto\alpha_v(a)$ is smooth.

$A^\infty$ can be given a suitable pre $C^*$-norm for which the product $\times_J$ is 
continuous in a way that the completion with respect to this norm obtains
the deformed algebra $A_J$. This justifies that the deformed product can be 
extended to the entire algebra $A_J$.

It should be remarked that the smooth subalgebra remains
unchanged as vector spaces $(A_J)^\infty = A^\infty$, even though they have different 
products \cite[Theorem 7.1]{MR1993.5}.

If the action $\alpha$ of $V$ is periodic, then $\alpha_v(a)= a$ for all $v$ in some lattice 
$\Lambda\subset V$ and $a\in A$. $\alpha$ can, then, be viewed as an action of the compact abelian 
group $H = V/\Lambda$. Then $A^\infty$ admits a decomposition into a direct sum of isotypic components 
indexed by $\Lambda=\widehat H$. It is shown in \cite[Proposition.~2.21]{MR1993} that if 
$\alpha_s(a_p) = e^{2\pi ip\cdot s} a_p$ and $\alpha_t(b_q) = e^{2\pi iq\cdot t} b_q$
with $p,q \in\Lambda$, then \eqref{eq:new.product}
defines an associative product.

The noncommutative toric manifolds \cite{CL2001}
based on the deformation quantization \eqref{eq:new.product}, as far as the
algebra structure is concerned, is a special case of Rieffel's deformation
quantization theory. The same remarks are also made in \cite{Si2001}.

\section{\bf Compact quantum Lie groups associated with $n$-torus}
\label{sec:quantum.group}

\noindent
We apply the construction in Section \ref{sec:deformation} to a compact Lie group $G$ of rank $n\geq2$ or 
equivalently $\T^n\subset G$. The natural action $\alpha_{(X,U)}(f)(g)=f\left(\exp(-X)g\exp(U)\right)$ of the Lie 
algebra $\R^n\times\R^n$ of $\T^n\times\T^n$ can be used to deform the algebra $C(G)$ of continuous 
functions. This is clearly a periodic action of $\R^n$. However, not every choice of deformation of this kind 
respects the coalgebra structure. In \cite{MR1993} Rieffel constructed a class of deformation of compact Lie 
groups of rank $n\geq2$ to compact quantum groups using his theory of deformation quantization. As 
noncommutative spaces, the resulting quantum groups are noncommutative toric manifolds.

With the notations above, let $\theta:\R^n\rightarrow\R^n$ be a skew symmetric linear operator and view $\theta_{ij}\in\R$
as parameters of the deformation. It was shown in \cite{MR1993.5} that the formula \eqref{eq:new.product} 
with $J=\theta\oplus(-\theta)$ with the unaltered coalgebra structure endows the algebra of continuous functions on the 
compact Lie group with a quantum group struture. Other choices of $J$ will not yield a quantum group.
We denote the deformed algebra by $C(G_\theta)$. Moreover, 
the deformation $C^\infty(G_\theta)$ of smooth functions on $G$ remains dense in $C(G_\theta)$ \cite{MR1993.5}.

In fact, the coproduct defined by $\Delta(f)(x,x')=f(xx')$ extends to a continuous homomorphism 
$\Delta:C(G_\theta)\rightarrow C(G_\theta)\otimes C(G_\theta)$. Here, the interpretation of $C(G_\theta)\otimes C(G_\theta)$ 
is given by $C\left((G\times G)_{\theta\oplus\theta}\right)\cong C(G_\theta)\otimes C(G_\theta)$ \cite{MR1993}. Moreover, 
the counit $\epsilon(f)(x)=f(e)$ remain a homomorphism and the coinverse $S(f)(x)=f(x^{-1})$ remain an anti-homomorphism, 
and they satisfy all the compatibility conditions in the deformation. Lastly, since the normalized Haar measure determines a linear functional 
$\mu$ on $C(G)$, $\mu$ gives a normalized state on $C(G_\theta)$. This state extends to a normalized tracial state $\mu_\theta:C(G_\theta)\rightarrow\C$ 
defined by $\mu_\theta(f)=\mu(f)$. In fact, this tracial state is a Haar state, which satisfies the properties in Theorem 4.2 of \cite{MR1993}.
\begin{customtheorem}{4.2 of \cite{MR1993}}\label{theorem:Haar.state}
The Haar measure $\mu$ on $C(G)$ determines a Haar state on the quantum group $C(G_\theta)$. 
That is, a continuous linear functional $\mu_\theta$ that is unimodular in the sense that
\begin{align}
\begin{split}
\left(id\otimes\mu_\theta\right)\circ\Delta=\iota\circ\mu_\theta\\
\left(\mu_\theta\otimes id\right)\circ\Delta=\iota\circ\mu_\theta,
\end{split}\\
\begin{split}
\left(id\otimes\mu_\theta\right)\left(1\otimes a\right)\left(\Delta b\right)=
\left(id\otimes\mu_\theta\right)\left(\left(S\otimes id\right)\Delta a\right)\left(1\otimes b\right),
\end{split}\\
\begin{split}
\mu_\theta\circ S=\mu_\theta.
\end{split}
\end{align}
Furthermore, $\mu_\theta$ is a faithful trace on $C(G_\theta)$.
\end{customtheorem}
We denote this faithful trace simply by $\mu$.

In \cite{MR1993} Rieffel determined the unitary dual of $G_\theta$. He showed that irreducible 
representations of $G$ give all the irreducible corepresentations of $C(G_\theta)$ and vice versa. Although it was shown in 
\cite{MR1993} that if $\pi$ and $\rho$ are representations of $G$ on $V$ and $W$, respectively, then $V\otimes W$ 
and $W\otimes V$ are equivalent, the equivalence is not given by the flip map $\sigma_{\pi\rho}:V\otimes 
W\rightarrow W\otimes V$, $\sigma_{\pi\rho}(v\otimes w)=w\otimes v$. 
The equivalence $\pi\otimes\rho\sim\rho\otimes\pi$ is given by 
\begin{align}\label{eq:product.representation}
\rho\otimes\pi=\sigma_{\rho\pi}R^{-1}_{\pi\rho}\left(\pi\otimes\rho\right)R_{\pi\rho}\sigma_{\rho\pi}
\end{align}
where $R_{\pi\rho}(v\otimes w)=\int\pi_{\exp(\theta KX)}v\otimes \rho_{\exp(\theta V)}e^{2\pi iX\cdot V}$. 
This is not so surprising since \cite[Proposition 2.4]{W1987} showed that a compact matrix group with the property that 
if the map $\sigma$ for every pair of representations is an interwining operator then the quantum group is necessarily 
commutative. However, in our setting, we will use only the fact that irreducible representations of $G$ give irreducible corepresentations 
of $C(G_\theta)$ and vice versa, and not \eqref{eq:product.representation} explicitly.

\begin{remark}
The number of parameters of the deformation can be computed easily. It is given by the number of independent parameters in 
a skew symmetric linear map $K:\R^n\rightarrow\R^n$, which has $\frac{n(n-1)}2$ parameter.
\end{remark}

\section{\bf Noncommutative toric deformations of $SU(3)$}
\label{sec:SU(3)}

\noindent In this section we compute the noncommutative toric deformation $SU(3)_\theta$ of $SU(3)$ using
the approach in Section \ref{sec:quantum.group}. As aforementioned, it turns $SU(3)_\theta$ into a quantum group.

We use the maximal torus $\T^2\subset SU(3)$ to deform the algebra $C^\infty(SU(3))$ via deformation quantization.
The coordinates $u_{ij}$ of $U\in SU(3)$ satisfy the relation
\begin{align*}
&\sum_{k=1}^3 u_{jk}\overline{u_{jl}}=\delta_{kl},\qquad
\sum_{k=1}^3 \overline{u_{kj}}u_{lj}=\delta_{kl}
\end{align*}
Let $\theta=\begin{pmatrix}0 & -\theta\\\theta &0\end{pmatrix}$ and
$$\T^2=\left\{
\begin{pmatrix}
e^{2\pi i\varphi_1} & 0 & 0\\
0 & e^{2\pi i\varphi_2} & 0\\
0 & 0 & e^{-2\pi i(\varphi_1+\varphi_2)}
\end{pmatrix}:\varphi_j\in\R\right\}.
$$
be a maximal torus of $SU(3)$. In fact, monomials in the coordinates are the isotypic components of the action 
\begin{align*}
\alpha_{x,s}(f(U))&=f\left(\exp(-x)g\exp(s)\right)\\
&=f\begin{pmatrix}e^{- 2 \pi i\left(x_1 - s_1\right)}u_{11} & e^{- 2 i\pi \left(x_1 - s_2\right)}u_{12} & e^{- 2 i\pi\left(x_1 + s_1 + s_2\right)}u_{13}\\
e^{- 2 i \pi\left(x_2 - s_1\right)}u_{21} & e^{- 2 i \pi\left(x_2 - s_2\right)}u_{22} & e^{- 2 i \pi\left(x_2 + s_1 + s_2\right)}u_{23}\\
e^{2 i \pi\left(x_1 + x_2 + s_1\right)}u_{31} & e^{2 i \pi\left(x_1 + x_2 + s_2\right)}u_{32} & e^{2 i \pi\left(x_1 + x_2 - s_1 - s_2\right)}u_{33}
\end{pmatrix}
\end{align*}
of $\T^2\times\T^2$. Each coordinate function is in an isotypic component $A_{\vec n}$, $\vec n={(n_1,n_2,n_3,n_4)}$, of this action. Then using the formula 
\eqref{eq:deform.product},
\begin{align*}
u_{ij}\times_\theta u_{kl}=e^{\pi i \theta(- n_1m_2 + n_2m_1 + n_3m_4 - n_4m_3)}u_{ij}u_{kl},
\end{align*}
$u_{ij}\in A_{\vec n}$ and $u_{kl}\in A_{\vec m}$.
Thus, we obtain the following commutation relations. 
\begin{alignat*}{3}
&u_{11}u_{12}=e^{2\pi i\theta}u_{12}u_{11},&\quad& u_{13}u_{11}=e^{2\pi i\theta}u_{11}u_{13},&\quad& u_{21}u_{11}=e^{2\pi i\theta}u_{11}u_{21},\\
&\left[u_{11},u_{22}\right]=0,&& u_{23}u_{11}=e^{4\pi i\theta}u_{11}u_{23},&& u_{11}u_{31}=e^{2\pi i\theta}u_{31}u_{11},\\
&u_{11}u_{32}=e^{4\pi i\theta}u_{32}u_{11},&& \left[u_{11},u_{33}\right]=0,&& u_{12}u_{13}=e^{2\pi i\theta}u_{13}u_{12},\\
&u_{12}u_{21}=e^{4\pi i\theta}u_{21}u_{12},&& u_{22}u_{12}=e^{2\pi i\theta}u_{12}u_{22},&& \left[u_{12},u_{23}\right]=0,\\
&\left[u_{12},u_{31}\right]=0,&& u_{12}u_{32}=e^{2\pi i\theta}u_{32}u_{12},&& u_{12}u_{33}=e^{4\pi i\theta}u_{33}u_{12},\\
&\left[u_{13},u_{21}\right]=0,&& u_{22}u_{13}=e^{4\pi i\theta}u_{13}u_{22},&& u_{23}u_{13}=e^{2\pi i\theta}u_{13}u_{23},\\
&u_{13}u_{31}=e^{4\pi i\theta}u_{31}u_{13},&& \left[u_{13},u_{32}\right]=0,&& u_{13}u_{33}=e^{2\pi i\theta}u_{33}u_{13},\\
&u_{21}u_{22}=e^{2\pi i\theta}u_{22}u_{21},&& u_{23}u_{21}=e^{2\pi i\theta}u_{21}u_{23},&& u_{31}u_{21}=e^{2\pi i\theta}u_{21}u_{31},\\
&\left[u_{21},u_{32}\right]=0,&& u_{33}u_{21}=e^{4\pi i\theta}u_{21}u_{33},&& u_{22}u_{23}=e^{2\pi i\theta}u_{23}u_{22},\\
&u_{31}u_{22}=e^{4\pi i\theta}u_{22}u_{31},&& u_{32}u_{22}=e^{2\pi i\theta}u_{22}u_{32},&& \left[u_{22},u_{33}\right]=0,\\
&\left[u_{23},u_{31}\right]=0,&& u_{32}u_{23}=e^{4\pi i\theta}u_{23}u_{32},&& u_{33}u_{23}=e^{2\pi i\theta}u_{23}u_{33},\\
&u_{31}u_{32}=e^{2\pi i\theta}u_{32}u_{31},&& u_{33}u_{31}=e^{2\pi i\theta}u_{31}u_{33},&& u_{32}u_{33}=e^{2\pi i\theta}u_{33}u_{32}.
\end{alignat*}

The coalgebra structure restricted these elements is given by
\begin{align*}
\Delta(u_{ij})=\sum_{k=1}^3u_{ik}\otimes u_{kj},&\quad&\epsilon(u_{ij})=\delta_{ij},&\quad&S(u_{ij})=u_{ji}^*.
\end{align*}
Then, $\sut$ is a compact quantum group.

The Theorem 3.9 of \cite{SW1996} shows that the algebra generated by $\{u_{ij}\}$ is dense in $\sut$, and therefore that $\sut$ is a 
compact matrix quantum group.

\begin{customtheorem}{3.9 in \cite{SW1996}}\label{lemma:basis}
The algebra generated by the polynomials in the coordinates $\{u_{ij}\}$ is dense in $\sut$.
\end{customtheorem}


\section{\bf Coaction of $\sut$ on $C(\NCS5$)}
\label{sec:coaction.S5}

\noindent 
As an example, we construct a coaction of $\sut$ on $C(\NCS5)$ in this section. It is a generalization of the classical action of $SU(3)$ on $S^5$. We 
prove that this coaction is cotransitive. This is analogous to the case of the classical action of $SU(3)$ on $S^5$ because it is transitive. However, the 
extent to which it remains a coaction at the algebraic level depends on the dependence of the parameters of both deformations. We found that the 
classical action cannot be lifted to a coaction of the deformed Hopf algebra structure on the noncommutative 5-sphere $C(\NCS5)$ for arbitrary parameters. 


The noncommutative 5-sphere $C(\NCS5)$ is a noncommutative toric deformation of the algebra of continuous functions on the 5-sphere $S^5$ and is the 
algebra generated by three normal elements $z_1,z_2$ and $z_3$ satisfying the commutation relations
\begin{align*}
z_jz_k=e^{2\pi i\theta_{jk}}z_kz_j,\qquad z_jz_k^*=e^{2\pi i\theta_{kj}}z_k^*z_j,\qquad \sum_{k=1}^3 z_kz_k^*=1.
\end{align*}
where $\theta_{jk}=-\theta_{kj}$.
\begin{definition}
Let $H$ be a Hopf algebra and $A$ an algebra. An algebra map
\begin{align*}
\rho:A\rightarrow H\otimes A
\end{align*}
is called a (left) coaction if 
\begin{enumerate}[i)]
\item $\left(\Delta\otimes id\right)\circ\rho=\left(id\otimes \rho\right)\circ\rho$ and 
\item $\left(\epsilon\otimes id\right)\circ\rho=id$.
\end{enumerate}
\end{definition}

For instance, $S^5\subset\C^3$ so the action of $SU(3)$ on $S^5$ is given by the matrix multiplication $(z_1,z_2,z_3)^T\mapsto U(z_1,z_2,z_3)^T$, 
$(z_1,z_2,z_3)^T\in S^5$ and $U\in SU(3)$. The dual version of this action is the coaction given by $z_j\mapsto\sum_{k=1}^3u_{jk}\otimes z_k$.

Let $H=\sut$ and $A=C(\NCS5)$. Then, a coaction of the quantum group $H$ on $A$ can be given by the following proposition.
\begin{theorem}\label{theorem:cotransitive}
Let $\theta'=(\theta_{ij}')$ and $-\theta_{12}=\theta_{13}=-\theta_{23}=\theta$. Then for this particular choice of the values of the parameters of $A$ and $H$, 
the linear map $\delta:A\rightarrow H\otimes A$ defined by $\delta(z_j)=\sum_{k=1}^3u_{jk}\otimes z_k$ 
extends to a left coaction of $H$ on the algebra $A$. This coaction is cotransitive in the sense that $A_\theta^{\mathrm{co}H_\theta}=\C$.
\end{theorem}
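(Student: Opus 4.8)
The plan is to verify the two defining axioms of a coaction for the map $\delta$, then separately establish cotransitivity. The first task is to check that $\delta$ is a well-defined \emph{algebra} map, i.e. that it respects the commutation relations of $A=C(\NCS5)$. This is the crux of the constraint $-\theta_{12}=\theta_{13}=-\theta_{23}=\theta$: applying $\delta$ to $z_jz_k=e^{2\pi i\theta_{jk}}z_kz_j$ produces $\sum_{a,b}u_{ja}u_{kb}\otimes z_az_b$ on the left and $e^{2\pi i\theta_{jk}}\sum_{a,b}u_{ka}u_{jb}\otimes z_bz_a$ on the right; rewriting the target in a fixed monomial order using the $z$-relations of $A$ and reindexing, one is left with a collection of scalar identities relating the bicharacter values $e^{2\pi i\theta(\cdots)}$ coming from the commutation relations of $\sut$ computed in Section \ref{sec:SU(3)} to the values $e^{2\pi i\theta_{jk}}$ from $A$. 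Here I would invoke Lemma \ref{lemma:action.condition}, which is exactly the criterion telling us when a classical coaction survives the simultaneous deformation of $G$ and the $G$-space; the content is that the torus weights of the $u_{ij}$ (read off from the displayed $\alpha_{x,s}$) must match, up to the chosen $\theta'$, the torus weights of the $z_k$, and the substitution $-\theta_{12}=\theta_{13}=-\theta_{23}=\theta$ is precisely what makes these match. I would also separately check that $\delta$ respects the sphere relation $\sum_k z_kz_k^*=1$ and the $z_jz_k^*$-relations, using $\sum_k u_{ik}u_{jk}^*=\delta_{ij}$ (the unitarity of $U$, which deforms consistently) together with $S(u_{ij})=u_{ji}^*$.

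Once $\delta$ is known to be an algebra homomorphism, the two coaction axioms $(\Delta\otimes\mathrm{id})\circ\delta=(\mathrm{id}\otimes\delta)\circ\delta$ and $(\epsilon\otimes\mathrm{id})\circ\delta=\mathrm{id}$ follow on generators by a direct computation identical to the classical one: $(\Delta\otimes\mathrm{id})\delta(z_j)=\sum_{k,l}u_{jl}\otimes u_{lk}\otimes z_k=(\mathrm{id}\otimes\delta)\delta(z_j)$ using $\Delta(u_{jk})=\sum_l u_{jl}\otimes u_{lk}$, and $(\epsilon\otimes\mathrm{id})\delta(z_j)=\sum_k\delta_{jk}z_k=z_j$ using $\epsilon(u_{jk})=\delta_{jk}$. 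Since both sides are algebra maps and agree on the generators, they agree on all of $A$; this part is routine and I would state it briefly.

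For cotransitivity I want to show $A^{\mathrm{co}H}=\{a\in A:\delta(a)=1\otimes a\}=\C$. The approach is to use the Haar state $\mu$ on $H=\sut$ (Theorem \ref{theorem:Haar.state}) to build the conditional expectation $E=(\mu\otimes\mathrm{id})\circ\delta:A\to A^{\mathrm{co}H}$, which is the identity on coinvariants. I would then expand a general element of $A$ in the vector-space basis coming from its isotypic decomposition and the monomial basis of Lemma \ref{lemma:basis}, compute $\delta$ of a basis monomial in the $z_k$, and apply $\mu$ termwise. The key input is that $\mu(u_{i_1k_1}\cdots u_{i_mk_m})$ annihilates everything except the part landing in the trivial corepresentation; since the standard corepresentation of $SU(3)$ on $\C^3$ (which is what $z\mapsto Uz$ restricts to) and all its tensor powers in the defining variables $z_k$, $z_k^*$ only contain the trivial representation in the total-degree-zero combinations, and the only such combinations are scalar multiples of powers of $\sum_k z_kz_k^*=1$, we get $E(a)\in\C$ for every $a$, hence $A^{\mathrm{co}H}\subseteq\mathbb C$. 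The reverse inclusion is trivial. I expect the main obstacle to be the first step — bookkeeping the weight identities to confirm that $\delta$ really is an algebra map for the stated parameters, and that no \emph{other} relation (in particular the ones mixing $z$ and $z^*$) imposes an incompatible constraint; everything after that is a faithful transcription of the classical transitivity argument into the deformed setting, made legitimate by the fact that the corepresentation theory of $\sut$ coincides with the representation theory of $SU(3)$.
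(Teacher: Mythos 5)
Your verification that $\delta$ is an algebra map follows the paper's route exactly: the paper also reduces everything to the criterion of Lemma \ref{lemma:action.condition}, checks the single relation $\delta(z_1\times_\theta z_2)=e^{\pi i\theta_{12}}\delta(z_1)\delta(z_2)$ against the weight bookkeeping to extract the constraint $-\theta_{12}=\theta_{13}=-\theta_{23}=\theta$, and dismisses the remaining relations and the two coaction axioms as identical computations (the coproduct and the linear map $\delta$ being unaltered). Where you genuinely diverge is cotransitivity. The paper does not argue it inside the proof at all: it relies on the final clause of Lemma \ref{lemma:action.condition}, namely that $A_\theta^{\mathrm{co}H_\theta}$ consists of the \emph{same elements} as the undeformed $A^{\mathrm{co}H}$, and then invokes the classical transitivity of $SU(3)$ acting on $S^5$ to conclude $A^{\mathrm{co}H}=\C$ --- a one-line corollary of the lemma you already cited. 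You instead rebuild the conditional expectation $E=(\mu\otimes\mathrm{id})\circ\delta$ from the Haar state and run the corepresentation-theoretic argument directly in the deformed algebra. That is valid (the comodule structure of $A_\theta$ is literally the classical one as a linear map, and the corepresentation theory of $\sut$ matches that of $SU(3)$, as Section \ref{sec:quantum.group} records), and it has the virtue of being self-contained, but it duplicates work the lemma already does for you; you should at least note that the multiplicity count of the trivial corepresentation in the monomial filtration is unchanged by the deformation precisely because $\delta$ and the underlying vector space are unchanged --- which is the content of the lemma's last sentence. Be slightly careful with the claim that the only degree-balanced invariant combinations are powers of $\sum_k z_kz_k^*$: the trivial representation also occurs via the $\epsilon$-tensor in degrees $n-m\equiv0\pmod 3$, and one must observe that those contributions vanish in the (classical) symmetric algebra; this is exactly the classical transitivity statement the paper outsources to the undeformed picture.
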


To prove Theorem \ref{theorem:cotransitive}, we prove the following lemma.
\begin{lemma}\label{lemma:action.condition}
Let $A$ be a algebra and $H$ a Hopf algebra both equipped with $n$-torus $\T^n$ actions, $\alpha_t$ and $\beta_t$, respectively. Then, a coaction $\rho:A\rightarrow H\otimes A$ 
of $H$ on $A$ extends to a coaction of $\rho_\theta:A_\theta\rightarrow H_{\theta'}\otimes A_\theta$ if for each $a=\underset{\vec n}{\sum}a_{\vec n}$ and 
$b=\underset{\vec n}{\sum}a_{\vec n}$ the following holds.
\begin{align}\label{eq:action.condition}
\begin{split}
&e^{\pi i\theta(\vec n,\vec m)}\int_{\T^n\times \T^n}\rho(\alpha_t(a_{\vec n}))\rho(\alpha_s(b_{\vec m}))e^{-2\pi i(t\cdot\vec n+s\cdot\vec m)}\\
&=\sum_{(\vec{n}',\vec{p}),(\vec{m}',\vec{q})}
e^{2\pi i(\theta'(\vec{n}',\vec{m}')+\theta(\vec{p},\vec{q}))}\times\\
&\qquad\qquad\int_{\T^{2n}\times\T^{2n}}\beta_{t'}\otimes\alpha_t(\rho(a_{n}))\beta_{s'}\otimes\alpha_s(\rho(b_{n}))\times\\
&\qquad\qquad\qquad e^{-2\pi i(t'\cdot\vec{n}'+t\cdot\vec{p}+s'\cdot\vec{m}'+s\cdot\vec{q})}dt'dtds'ds
\end{split}
\end{align}
Moreover, the algebra of coinvariant elements $A_\theta^{\mathrm{co}H_\theta}\subset A_\theta$ consists of the same elements as $A^{\mathrm{co}H}$.
\end{lemma}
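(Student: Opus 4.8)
The plan is to unwind both sides of the condition \eqref{eq:action.condition} as explicit oscillatory integrals and recognize them as the two ways of computing $\rho_\theta(a \times_\theta b)$, once using that $\rho_\theta$ should be multiplicative for the deformed product on $A_\theta$, and once using the definition of the deformed product on $H_{\theta'}\otimes A_\theta \cong H_{\theta'} \otimes A_\theta$ built from the $\T^n\times\T^n$-action $\beta\otimes\alpha$. Concretely, first I would fix homogeneous components $a_{\vec n}\in A_{\vec n}$ and $b_{\vec m}\in A_{\vec m}$ and note that, since $\rho$ is an (undeformed) algebra map intertwining $\alpha$ with $\beta\otimes\alpha$ (this is exactly what makes $\delta(z_j)=\sum_k u_{jk}\otimes z_k$ equivariant in the $SU(3)$ example), $\rho(a_{\vec n})$ decomposes into $\beta\otimes\alpha$-isotypic pieces indexed by pairs $(\vec n',\vec p)$ with $\vec n' + \vec p = \vec n$ in the appropriate sense dictated by how the torus sits inside $H$ and acts on $A$. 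The left-hand side of \eqref{eq:action.condition} is then by definition $\rho$ applied to $a_{\vec n}\times_\theta b_{\vec m} = \chi(\vec n,\vec m)\, a_{\vec n}b_{\vec m}$, i.e. $e^{\pi i\theta(\vec n,\vec m)}\rho(a_{\vec n}b_{\vec m}) = e^{\pi i \theta(\vec n,\vec m)}\rho(a_{\vec n})\rho(b_{\vec m})$, rewritten via the standard Rieffel identity $\rho(a_{\vec n})\rho(b_{\vec m}) = \int \beta_t\otimes\alpha_t(\rho(a_{\vec n}))\,\beta_s\otimes\alpha_s(\rho(b_{\vec m}))\, e^{-2\pi i(t\cdot\vec n + s\cdot\vec m)}\,dt\,ds$ coming from the isotypic-projection formula. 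The right-hand side is, by definition, $\rho(a_{\vec n})\times_{\theta'\oplus\theta}\rho(b_{\vec m})$ expanded over the bi-isotypic decomposition $\sum_{(\vec n',\vec p)}$, $\sum_{(\vec m',\vec q)}$, with the bicharacter factor $e^{2\pi i(\theta'(\vec n',\vec m') + \theta(\vec p,\vec q))}$. So the content of the hypothesis \eqref{eq:action.condition} is precisely the equality $\rho(a\times_\theta b) = \rho(a)\times_{\theta'\oplus\theta}\rho(b)$ on homogeneous components, and then by linearity and density of $A^\infty$ in $A_\theta$ (the smooth subalgebra is unchanged, \cite[Theorem 7.1]{MR1993.5}) this extends to all of $A_\theta$, giving that $\rho_\theta := \rho$ is an algebra map $A_\theta \to H_{\theta'}\otimes A_\theta$. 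The coaction axioms (i) and (ii) are inherited for free: $\Delta$, $\epsilon$ are the \emph{same} maps as before (only the products changed), and the identities $(\Delta\otimes\mathrm{id})\rho = (\mathrm{id}\otimes\rho)\rho$ and $(\epsilon\otimes\mathrm{id})\rho = \mathrm{id}$ hold pointwise on elements, hence survive deformation verbatim.

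For the second assertion, I would argue that the coinvariant subspace is insensitive to the deformation because it is cut out by a linear condition. By definition $A_\theta^{\mathrm{co}H_\theta} = \{a\in A_\theta : \rho_\theta(a) = 1\otimes a\}$, and since $\rho_\theta$ and $\rho$ agree as \emph{linear} maps on the common smooth subalgebra $A^\infty = (A_\theta)^\infty$, the equation $\rho_\theta(a) = 1_{H_{\theta'}}\otimes a$ is literally the same equation as $\rho(a) = 1_H \otimes a$ — the unit of $H_{\theta'}$ is the unit of $H$, and the linear map is untouched. Hence the smooth coinvariants coincide, and passing to the $C^*$-completion (or just taking closures, using that $\rho_\theta$ is continuous) gives $A_\theta^{\mathrm{co}H_\theta} = A^{\mathrm{co}H}$ as claimed.

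The step I expect to be the main obstacle is the bookkeeping in the first paragraph: making precise the claim that $\rho$ maps the $\vec n$-isotypic component of $A$ into the direct sum over $(\vec n',\vec p)$ with $\vec n' + \vec p = \vec n$ of the $(\vec n',\vec p)$-bi-isotypic component of $H\otimes A$, and verifying that under this decomposition the phase $e^{\pi i\theta(\vec n,\vec m)}$ on the left matches $e^{2\pi i(\theta'(\vec n',\vec m') + \theta(\vec p,\vec q))}$ summed on the right exactly when the parameters are compatible — this is where the hypothesis \eqref{eq:action.condition} is doing real work rather than being a formality, and one must be careful that the equivariance of $\rho$ (intertwining $\alpha$ with $\beta\otimes\alpha$) is what forces $\vec n' + \vec p = \vec n$. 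Once that compatibility is recorded, everything else is a matter of substituting the Rieffel integral formula \eqref{eq:deform.product} for both $\times_\theta$ and $\times_{\theta'\oplus\theta}$ and observing that \eqref{eq:action.condition} is exactly the resulting identity; no further estimates are needed since continuity of all the maps involved is already guaranteed by Rieffel's theory.
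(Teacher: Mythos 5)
Your proposal matches the paper's own proof in all essentials: both identify the left side of \eqref{eq:action.condition} with $\rho(a\times_\theta b)$ expanded via the isotypic-projection integrals and the right side with $\rho(a)\times_{\theta'\oplus\theta}\rho(b)$, so the hypothesis is exactly multiplicativity of $\rho$ for the deformed products, after which the coaction axioms and the coinvariant subalgebra are unchanged because $\Delta$, $\epsilon$, and $\rho$ are untouched as linear maps. The extra remarks on equivariance and on density of the smooth subalgebra are harmless elaborations not present in (and not needed by) the paper's argument.
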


\begin{proof}
Note that $a_{\vec n}=\int_{\T^n}\alpha_t(a)e^{-2\pi i(t\cdot\vec n)}$. For $\rho$ to be a homomorphism with respect to the new products, we need to have 
\[
\rho(a\times_\theta b)=\rho(a)\times_{\theta'\oplus\theta}\rho(b)
\]
The left hand side gives
\begin{align*}
\rho(a\times_\theta b)&=\rho\left(\sum_{\vec{n},\vec{m}}e^{\pi i\theta(\vec{n,}\vec{m})}a_{\vec{n}}b_{\vec{m}}\right)\\
&=\sum_{\vec{n},\vec{m}}e^{\pi i\theta(\vec{n,}\vec{m})}\rho(a_{\vec{n}})\rho(b_{\vec{m}})\\
&=\sum_{\vec{n},\vec{m}}e^{\pi i\theta(\vec{n,}\vec{m})}\int_{\T^{n}\times\T^{n}}\rho(\alpha_t(a))\rho(\alpha_s(b))e^{-2\pi i(t\cdot\vec{n}+s\cdot\vec{m})}dtds
\end{align*}
while the right hand side gives
\begin{align*}
\rho(a)\times_{\theta'\oplus\theta}\rho(b)&=\left(\sum_{\vec{n}}\rho(a_{\vec{n}})\right)\times_{\theta'\oplus\theta}\left(\sum_{\vec{m}}\rho(b_{\vec{m}})\right)\\
&=\sum_{\vec{n},\vec{m}}\rho(a_{\vec{n}})\times_{\theta'\oplus\theta}\rho(b_{\vec{m}})\\
&=\sum_{\vec{n},\vec{m}}\sum_{(\vec{n}',\vec{p}),(\vec{m}',\vec{q})}e^{2\pi i(\theta'(\vec{n}',\vec{m}')+\theta(\vec{p},\vec{q}))}\times\\
&\qquad\qquad\int_{\T^{2n}\times\T^{2n}}\beta_{t'}\otimes\alpha_t(\rho(a_{n}))\beta_{s'}\otimes\alpha_s(\rho(b_{n}))\times\\
&\qquad\qquad\qquad e^{-2\pi i(t'\cdot\vec{n}'+t\cdot\vec{p}+s'\cdot\vec{m}'+s\cdot\vec{q})}dt'dtds'ds~.
\end{align*}
This proves that $\rho$ is a homomorphism if this condition \eqref{eq:action.condition} is satisfied. 

To show that the $\rho$ is a coaction, note that the coproduct $\Delta$ and the coaction itself $\rho$ are unchanged. Therefore, the conditions i) and ii) of 
right coaction is automatically satisfied.

Since the coaction $\rho$ is unchanged, we see that the coinvariant elements remain unchanged. 
\end{proof}

\begin{proof}[Proof of Theorem \ref{theorem:cotransitive}]
We show that the equation \eqref{eq:action.condition} holds for those choices of values of $\theta_{jk}$. For instance, 
\begin{align*}
\delta(z_1)&=u_{11}\otimes z_1+u_{12}\otimes z_2+u_{13}\otimes z_3\\
\delta(z_2)&=u_{21}\otimes z_1+u_{22}\otimes z_2+u_{23}\otimes z_3
\end{align*}
The choice of $\theta'$ for which the deformation $\sut$ remains a Hopf algebra is $\theta'(n',m')=\theta'\left(n_2'm_1'-n_1'm_2'-n_4'm_3'+n_3'm_4'\right)$, 
$\theta'\in\R$ but the choice of $\theta$ a priori remains free. 
Now,
\begin{align*}
\delta(z_1\times_\theta z_2)=e^{\pi i\theta_{12}}\delta(z_1)\delta(z_2)
\end{align*}
while 
\begin{align*}
&\sum_{(\vec n',\vec p),(\vec m,\vec q)} e^{2\pi i(\theta'(\vec{n}',\vec{m}')+\theta(\vec{p},\vec{q}))}\int\beta_{t'}\otimes\alpha_{t}(\delta(a_{n}))\beta_{s'}\otimes\alpha_{s}(\delta(b_{n}))e^{-2\pi i(t'\cdot\vec{n}'+t\cdot\vec{p}+s'\cdot\vec{m}'+s\cdot\vec{q})}\\
=&e^{\pi i(-\theta+0)}u_{11}u_{21}\otimes z_1z_1+e^{\pi i(0-\theta_{12})}u_{11}u_{22}\otimes z_1z_2+e^{\pi i(-2\theta-\theta_{13})}u_{11}u_{23}\otimes z_1z_3\\
&+e^{\pi i(-2\theta-\theta_{12})}u_{12}u_{21}\otimes z_2z_1+e^{\pi i(-\theta+0)}u_{12}u_{22}\otimes z_2z_2+e^{\pi i(0+\theta_{23})}u_{12}u_{23}\otimes z_2z_3\\
&+e^{\pi i(0-\theta_{13})}u_{13}u_{21}\otimes z_3z_1+e^{\pi i(-2\theta-\theta_{23})}u_{13}u_{22}\otimes z_3z_2+e^{\pi i(-\theta+0)}u_{13}u_{23}\otimes z_3z_3
\end{align*}
From the above relation only the necessary condition for the values of $\theta=\left(\theta_{jk}\right)$ are already restricted to 
$\theta_{12}=-\theta_{13}=\theta_{23}=-\theta$ where $\theta'=\theta$ times a skew adjoint matrix. The condition \eqref{eq:action.condition} says that it is enough to prove such relations for the isotypic components 
of the algebra. The rest of the commutation relation can be checked similarly. Thus, $\delta$ is a coaction.
\end{proof}

There is another criterion to which a coaction remains a coaction with respect to the deformed products.

\begin{lemma}\label{lemma:key.lemma}
Let $A$ be an algebra and $H$ a Hopf algebra both equipped with $n$-torus $\T^n$ actions. Then, an equivariant coaction $\rho:A\rightarrow H\otimes A$ 
of $H$ on $A$ extends to a coaction of $\rho_\theta:A_\theta\rightarrow H_\theta\otimes A_\theta$. Moreover, the algebra of 
coinvariant elements $A_\theta^{\mathrm{co}H_\theta}\subset A_\theta$ consists of the same elements as $A^{\mathrm{co}H}$.
\end{lemma}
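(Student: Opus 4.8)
\textbf{Proof plan for Lemma \ref{lemma:key.lemma}.}

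The plan is to show that the hypothesis of Lemma \ref{lemma:action.condition} is automatically satisfied once the coaction $\rho$ is $\T^n$-equivariant, where equivariance means precisely that $\rho \circ \alpha_t = (\beta_t \otimes \alpha_t) \circ \rho$ for all $t \in \T^n$. This is the cleanest route since Lemma \ref{lemma:action.condition} already packages all the bookkeeping about the oscillatory integrals; what remains is to exploit equivariance to collapse the double integral on the right-hand side of \eqref{eq:action.condition}. First I would record the consequence of equivariance at the level of isotypic components: if $a_{\vec n} \in A_{\vec n}$, then $\rho(a_{\vec n})$ lies in the isotypic component of $H \otimes A$ for the diagonal action $\beta \otimes \alpha$ indexed by the diagonal element $(\vec n, \vec n)$. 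Equivalently, in the notation of the lemma, the only pairs $(\vec n', \vec p)$ contributing to $\rho(a_{\vec n})$ are those with $\vec n' = \vec p = \vec n$, and similarly $\vec m' = \vec q = \vec m$ for $\rho(b_{\vec m})$.

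With that observation, the right-hand side of \eqref{eq:action.condition} simplifies dramatically: the sum over $(\vec n', \vec p), (\vec m', \vec q)$ reduces to the single term with $\vec n' = \vec p = \vec n$ and $\vec m' = \vec q = \vec m$, and since $H$ is deformed by the \emph{same} matrix $\theta$ (note the lemma takes $H_\theta$, not $H_{\theta'}$), the phase factor $e^{2\pi i(\theta'(\vec n', \vec m') + \theta(\vec p, \vec q))}$ becomes $e^{2\pi i(\theta(\vec n, \vec m) + \theta(\vec n, \vec m))} = e^{4\pi i\theta(\vec n,\vec m)}$... here I need to be careful with the normalization: the deformation phase in \eqref{eq:new.product}–\eqref{eq:deform.product} carries a factor $\tfrac12$, so the product $\times_{\theta\oplus\theta}$ on $H\otimes A$ applied to $\rho(a_{\vec n})$ and $\rho(b_{\vec m})$ contributes the phase $e^{\pi i\theta(\vec n,\vec m)}$ from the $H$-tensor-leg and $e^{\pi i \theta(\vec n,\vec m)}$... no — the diagonal isotypic index is $(\vec n,\vec n)$, and $\theta\oplus\theta$ evaluated on $((\vec n,\vec n),(\vec m,\vec m))$ equals $2\theta(\vec n,\vec m)$ only if $\theta(\vec n,\vec m)$ denotes the pairing on a single copy; so the phase is $e^{\pi i \cdot 2\theta(\vec n,\vec m)}$, which must be matched against the left-hand side phase $e^{\pi i\theta(\vec n,\vec m)}$ times the phase already present \emph{inside} $\rho(\alpha_t(a_{\vec n}))\rho(\alpha_s(b_{\vec m}))$ after using equivariance to rewrite this as $(\beta_t\otimes\alpha_t)(\rho(a_{\vec n}))\cdot(\beta_s\otimes\alpha_s)(\rho(b_{\vec m}))$. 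The upshot is that both sides reduce to $e^{\pi i\theta(\vec n,\vec m)}$ times the undeformed product $\rho(a_{\vec n})\rho(b_{\vec m})$ in $H\otimes A$, so \eqref{eq:action.condition} holds term by term.

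The coaction axioms and the statement about coinvariants then follow exactly as in the proof of Lemma \ref{lemma:action.condition}: the maps $\Delta$, $\epsilon$, $\rho$ are the same underlying linear maps after deformation, so $(\Delta\otimes\mathrm{id})\circ\rho_\theta = (\mathrm{id}\otimes\rho_\theta)\circ\rho_\theta$ and $(\epsilon\otimes\mathrm{id})\circ\rho_\theta=\mathrm{id}$ are inherited, and an element is $\rho_\theta$-coinvariant iff it is $\rho$-coinvariant since the equation $\rho(a)=1\otimes a$ is unchanged by deformation. The main obstacle I anticipate is purely the normalization bookkeeping in the phase factors — making sure the factor of $\tfrac12$ in the bicharacter, the doubling from the diagonal isotypic index on $H\otimes A$, and the extra phase hidden inside $\rho(\alpha_t(a))$ after invoking equivariance all cancel correctly — rather than anything conceptually deep; once the equivariance identity $\rho\circ\alpha_t = (\beta_t\otimes\alpha_t)\circ\rho$ is in hand, the structural argument is immediate.
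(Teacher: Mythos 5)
Your strategy---verify the hypothesis of Lemma \ref{lemma:action.condition} using equivariance---is reasonable in outline and is morally the same computation the paper performs directly, but the key observation on which your entire phase bookkeeping rests is false. Equivariance, $\rho\circ\alpha_t=(\beta_t\otimes\alpha_t)\circ\rho$, places $\rho(a_{\vec n})$ in the $\vec n$-th isotypic component of $H\otimes A$ for the \emph{diagonal} torus action; written in bi-indices this means the contributing pairs $(\vec n',\vec p)$ satisfy $\vec n'+\vec p=\vec n$, \emph{not} $\vec n'=\vec p=\vec n$. The paper's own example already shows this: $\delta(z_1)=u_{11}\otimes z_1+u_{12}\otimes z_2+u_{13}\otimes z_3$ involves three distinct bi-indices, none of them ``diagonal.'' Consequently the sum over $(\vec n',\vec p),(\vec m',\vec q)$ in \eqref{eq:action.condition} does not collapse to a single term, and the phase $\theta(\vec n',\vec m')+\theta(\vec p,\vec q)$ attached to each surviving term differs from the left-hand phase $\theta(\vec n'+\vec p,\vec m'+\vec q)$ by the cross terms $\theta(\vec n',\vec q)+\theta(\vec p,\vec m')$, which have no reason to vanish in general---indeed, Theorem \ref{theorem:cotransitive} needs specific relations among the parameters precisely to make such phases come out right. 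Your own hedging (``here I need to be careful \dots\ no ---'') signals the problem: the phase matching is never actually completed, and as set up it cannot be.

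The paper's proof avoids this entirely by never decomposing into bi-indices. It uses equivariance to pull $\rho$ through the isotypic projection integral, so that $\rho(a_{\vec n})=\int_{\T^n}\alpha_t(\rho(a))e^{2\pi i \vec n\cdot t}\,dt$ is the $\vec n$-th component of $\rho(a)$ for the diagonal action on $H\otimes A$; then $\rho(a)\times_\theta\rho(b)=\sum e^{\pi i\theta(\vec n,\vec m)}\rho(a_{\vec n})\rho(b_{\vec m})$ matches $\rho(a\times_\theta b)$ term by term with the single phase $e^{\pi i\theta(\vec n,\vec m)}$, where the deformed product on $H\otimes A$ is taken with respect to that diagonal action. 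If you want to salvage your route you must either work with this diagonal deformation (in which case Lemma \ref{lemma:action.condition} is the wrong intermediary, since its right-hand side is graded by the product action) or supply an additional argument that the cross-term phases cancel on the image of $\rho$. Your treatment of the coaction axioms and of the coinvariants is fine and agrees with the paper.
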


\begin{proof}
First, we show that $\rho$ remains a homomorphism under the assumptions. Let 
$a=\sum_{l\in\Z^n}a_l$ and $b=\sum_{r\in\Z^n}b_r$, $a_l=\int_{\T^n}\alpha_t(a)e^{2\pi il\cdot t}dt$ and $b_r=\int_{\T^n}\alpha_t(b)e^{2\pi ir\cdot t}dt$
\begin{align}
\begin{split}
\rho(a\times_\theta b)&=\rho(\sum_{l,r\in\Z^n}e^{\pi i\theta(l,r)}a_lb_r)\\
&=\rho\left(\sum_{l,r\in\Z^n}e^{\pi i\theta(l,r)}\left(\int_{\T^n}\alpha_t(a)e^{2\pi il\cdot t}dt\right)
\left(\int_{\T^n}\alpha_t(b)e^{2\pi ir\cdot t}dt\right)\right)\\
&=\left(\sum_{l,r\in\Z^n}e^{\pi i\theta(l,r)}\left(\rho(\int_{\T^n}\alpha_t(a)e^{2\pi il\cdot t}dt)\right)
\left(\rho(\int_{\T^n}\alpha_t(b)e^{2\pi ir\cdot t}dt)\right)\right)\\
&=\left(\sum_{l,r\in\Z^n}e^{\pi i\theta(l,r)}\left(\int_{\T^n}\alpha_t(\rho(a))e^{2\pi il\cdot t}dt\right)
\left(\int_{\T^n}\alpha_t(\rho(b))e^{2\pi ir\cdot t}dt\right)\right)\\
&=\rho(a)\times_\theta\rho(b).
\end{split}
\end{align}
To check that the other compatibility conditions, since $\rho$ already satisfies 
\begin{align*}
(1_H\otimes\rho)\circ\rho(a)&=(\Delta\otimes1_A)\circ\rho(a)\\
(\epsilon\otimes1_A)\circ\rho(a)&=a
\end{align*}
and the coproduct $\Delta$ and the counit $\epsilon$ are unaltered the above equalities still hold. This proves that $\rho$ is a coaction with respect to the deformed 
product.

We would like to show, now, that $A_\theta^{\mathrm{co}H_\theta}=A^{\mathrm{co}H}$. Recall that
$$
A^{\mathrm{co}H}=\{a\in A:\rho(a)=1\otimes a\}.
$$
Since $\rho(a)=1\otimes a$ for all $a\in A^{\mathrm{co}H}$ and elements of $A$ can be regarded as the elements of $A_\theta$, by the very extension of $\rho$, 
we have that $a\in A_\theta^{\mathrm{co}H_\theta}$. Likewise if $c\in A_\theta^{\mathrm{co}H_\theta}$, then
$\rho(c)=1\otimes c$. Using the same argument by interchanging $A_\theta^{\mathrm{co}H_\theta}$ and $A_\theta^{\mathrm{co}H_\theta}$, we have that 
$c\in A_\theta^{\mathrm{co}H_\theta}$.
\end{proof}

In the next section, a coaction with nontrivial coinvariant algebra will be constructed.


\section{\bf Fibration of $\left(S^5\times S^5\right)_{\theta'}$ as a $SU(3)_\theta$-bundle}
\label{sec:S5S5}

\noindent
In this section, a noncommutative toric deformation of the fibration $SU(3)\rightarrow S^5\times S^5\rightarrow \left(S^5\right)^{\text{inv}SU(3)}$ is constructed. 
Classically, it is not hard to see that $S^5\times S^5$ is a principal $SU(3)$-bundle over its invariant elements. Unlike the case of $S^5$, the action is not transitive. 
In \cite{LvS2005} a noncommutative analogue of Hopf fibration $SU(2)\rightarrow S_\theta^7\rightarrow S^4$ was considered but the quantum group was the classical group 
$SU(2)$. $SU(2)$ cannot be deformed to a noncommutative Hopf algebra in the present framework because it has only rank 1. This section generalizes this construction 
to a case of a quantum compact group $SU(3)_\theta$. 

The coinvariant subalgebra of the product of two noncommutative 5-spheres $\left(S^5\times S^5\right)_{\theta'}$ with respect to the coaction is computed. On the other hand, 
according to the procedure of \cite{CL2001}, $S^5\times S^5$ can be deformed to a noncommutative space with 15 parameters but for the same reason as the case of 
$S^5_{\theta'}$, the linear coaction need not lift to a coaction on the algebra for arbitrary choices of the parameters. We also determine the parameters of $\theta'$ of 
$\left(S^5\times S^5\right)_{\theta'}$ for which the coaction can be extended to the algebra. At the classical level of $S^5\times S^5$, the natural action is defined by the 
diagonal action
\begin{align*}
\alpha:(z_1,z_2,z_3,w_1,w_2,w_3)^T\mapsto \begin{pmatrix}U & 0\\ 0 & U\end{pmatrix}(z_1,z_2,z_3,w_1,w_2,w_3)^T,\quad U\in SU(3),
\end{align*}
which easily extends to a right coaction
\begin{align}\label{eq:coactionS5S5}
\begin{split}
&\delta(z_s)=\sum_{k=1}^3u_{sk}\otimes z_k~,\quad s=1,2,3,\\
&\delta(w_t)=\sum_{\ell=1}^3u_{t\ell}\otimes w_{\ell}~,\quad t=1,2,3
\end{split}
\end{align}
where $z_1,z_2,z_3,w_1,w_2,w_3$ are generators of the algebra of $\left(S^5\times S^5\right)_{\theta'}$. 

For convenience of writing the commutation relation, we let $w_1=z_4,$ $w_2=z_5$ and $w_3=z_6$ so that 
\begin{align*}
&z_jz_k=e^{2\pi i\theta'_jk}z_kz_j,\qquad z_jz_k^*=e^{-2\pi i\theta'_jk}z_k^*z_j,\qquad \left[z_j,z_j^*\right]=0
\\
&z_1z_1^*+z_2z_2^*+z_3z_3^*=z_4z_4^*+z_5z_5^*+z_6z_6^*=1
\end{align*}
We now compute the coinvariant elements. 

\begin{theorem}\label{theorem:coaction.of.S5S5}
If $\theta'$ were chosen so that 
\begin{align}
\begin{split}\label{eq:dependece.of.theta1}
\theta&=-\theta'_{12}=\theta'_{13}=-\theta'_{23}=-\theta'_{45}=\theta'_{46}=-\theta'_{56}, \\
\lambda_1&=\theta'_{14}=\theta'_{25}=\theta'_{36}, \qquad
\lambda_2=\theta'_{15}=\theta'_{26}=\theta'_{34},  \qquad
\lambda_3=\theta'_{16}=\theta'_{24}=\theta'_{35}
\end{split}
\end{align} 
and
\begin{align}\label{eq:dependece.of.theta2}
&\lambda_1  - \lambda_2=
-\lambda_1 + \lambda_3=\theta,
\end{align}
then the above map $\delta$ in \eqref{eq:coactionS5S5} extends to a coaction on $A\left(S^5\times S^5\right)_{\theta'}$. 
The subalgebra $B=A\left(\left(S^5\times S^5\right)_{\theta'}\right)^{\text{co}H_\theta}$ of coinvariant elements is generated by 
$\big\{1 , z_1^*z_4+z_2^*z_5+z_3^*z_6 ,~
z_1z_4^*+z_2z_5^*+z_3z_6^*\big\}$. Moreover, the $*$-subalgebra $B$ 
is commutative.
\end{theorem}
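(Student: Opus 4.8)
The plan is to verify the hypotheses of Lemma \ref{lemma:action.condition} for the diagonal coaction \eqref{eq:coactionS5S5}, and then to compute the fixed-point algebra directly. First I would set up the isotypic decomposition: each generator $z_j$ lies in a single isotypic component of the $\T^2$-action inherited from the $SU(3)$-action (since $z_j\mapsto\sum_k u_{jk}\otimes z_k$ and the $u_{jk}$ are eigenvectors), and I would record the characters $\vec n(j)\in\Z^4$ associated to $z_1,\dots,z_6$. The key point, exactly as in the proof of Theorem \ref{theorem:cotransitive}, is that condition \eqref{eq:action.condition} only needs to be checked on pairs of isotypic components; so I would reduce to verifying, for each pair $(z_i,z_j)$, that the phase produced by first deforming $A$ and then applying $\delta$ agrees with the phase produced by $\delta$ followed by the deformed product on $H\otimes A$ with parameter $\theta'\oplus\theta$. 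This is a finite bookkeeping check over the $\binom{6}{2}+6$ pairs, and it is precisely here that the constraints \eqref{eq:dependece.of.theta1} and \eqref{eq:dependece.of.theta2} on $\theta'_{ij}$ emerge: the ``diagonal'' relations $\theta=-\theta'_{12}=\theta'_{13}=\cdots$ handle the $z$--$z$ and $w$--$w$ blocks (these are two copies of the $S^5_{\theta'}$ computation already done), while the $z$--$w$ cross relations force $\lambda_1,\lambda_2,\lambda_3$ to be constant along the three ``generalized diagonals'' $\{(1,4),(2,5),(3,6)\}$, $\{(1,5),(2,6),(3,4)\}$, $\{(1,6),(2,4),(3,5)\}$, and the compatibility with the $SU(3)_\theta$ relations (the $u_{ij}u_{kl}$ commutation phases $e^{2\pi i\theta}$, $e^{4\pi i\theta}$) forces $\lambda_1-\lambda_2=-\lambda_1+\lambda_3=\theta$. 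I expect this cross-block phase-matching to be the main obstacle: one must carefully match the $\theta$-phase coming from the non-flip equivalence of corepresentations of $H$ against the $\theta'$-phase of the $(S^5\times S^5)_{\theta'}$ generators.

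Once $\delta$ is known to be an algebra homomorphism for the deformed products, Lemma \ref{lemma:action.condition} gives immediately that it is a coaction (the coalgebra data of $H$ and the coaction map itself are unchanged) and that $B=A((S^5\times S^5)_{\theta'})^{\mathrm{co}H_\theta}$ coincides, as a set, with the classical coinvariant algebra $A(S^5\times S^5)^{\mathrm{co}SU(3)}$. So the second task is to identify the classical coinvariant algebra. For this I would argue that $f$ is coinvariant iff $\delta(f)=1\otimes f$, i.e. $f$ is invariant under the diagonal $SU(3)$-action on $S^5\times S^5\subset\C^3\times\C^3$. Invariant theory of the diagonal action of $SU(3)$ on $\C^3\oplus\C^3$ (equivalently $U(3)$ acting on two copies of its standard representation, with the sphere conditions $\sum z_k\bar z_k=\sum w_\ell\bar w_\ell=1$ imposed) shows the invariants are generated by the Hermitian inner products $\sum_k z_k\bar z_k$, $\sum_\ell w_\ell\bar w_\ell$, $\sum_k \bar z_k w_k$, $\sum_k z_k\bar w_k$ together with the determinant $\det(z,w,\bar{\,\cdot\,})$-type invariants; but on the product of unit spheres the first two are $1$, and the $SU(3)$ (as opposed to $U(3)$) determinant invariants can be expressed via the others once one also allows conjugates, leaving the algebra generated by $1$, $\sum_k z_k^*z_{k+3}$ and $\sum_k z_kz_{k+3}^*$. (Here I would be slightly careful about whether any further $SU(3)$-specific determinant invariant survives; the cleanest route is to observe that any polynomial invariant of the compact group $SU(3)$ acting diagonally, when restricted to the orbit structure of $S^5\times S^5$, is a function of the single complex scalar $c=\sum_k\bar z_k w_k$ and its conjugate, because the generic $SU(3)$-orbit of a pair of unit vectors is determined by $c$.) This classical computation is standard and I would cite it or give the short orbit-counting argument.

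Finally I would check the two remaining assertions about $B$. Commutativity: write $x=\sum_k z_k^*z_{k+3}$ and $y=\sum_k z_kz_{k+3}^*=x^*$; using the commutation relations of $(S^5\times S^5)_{\theta'}$ under the constraints \eqref{eq:dependece.of.theta1}–\eqref{eq:dependece.of.theta2}, expand $xy$ and $yx$ term by term. The diagonal terms $z_k^*z_{k+3}z_kz_{k+3}^*$ match trivially; for the off-diagonal terms $z_j^*z_{j+3}z_kz_{k+3}^*$ with $j\ne k$, the accumulated phase is a sum of four $\theta'_{\bullet\bullet}$'s which, by the relations $\lambda_1-\lambda_2=-\lambda_1+\lambda_3=\theta$ and $\theta=-\theta'_{12}=\cdots$, telescopes to zero, so $xy=yx$; since $x^*=y$ this makes the $*$-subalgebra generated by $x$ (hence $B$) commutative. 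I expect this last step to be routine once the phase constraints are in hand — the constraints \eqref{eq:dependece.of.theta2} are in fact exactly what is needed to make these cross-phases cancel, which is a good internal consistency check on the statement. I would present the commutativity computation compactly, displaying only the phase of a representative cross term and noting that \eqref{eq:dependece.of.theta1}–\eqref{eq:dependece.of.theta2} force it to vanish.
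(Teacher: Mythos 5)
Your proposal follows essentially the same route as the paper's proof: apply Lemma \ref{lemma:action.condition} componentwise on isotypic components to derive the constraints \eqref{eq:dependece.of.theta1}--\eqref{eq:dependece.of.theta2} and conclude that the coinvariants are unchanged, identify the classical coinvariants of the diagonal $SU(3)$-action, and verify commutativity of $B$ by a term-by-term phase cancellation using \eqref{eq:dependece.of.theta2}. The only difference is that you spell out the classical invariant-theory step (orbit parametrization by $c=\sum_k\bar z_k w_k$ and the absence of extra determinant-type $SU(3)$ invariants), which the paper leaves as ``not difficult to compute''; this is a welcome addition but not a different method.
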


\begin{proof}
Using the same argument of the proof of Theorem \ref{theorem:cotransitive}, 
$\theta=-\theta'_{12}=\theta'_{13}=-\theta'_{23}=-\theta'_{45}=\theta'_{46}=-\theta'_{56}$. 
Further calculation using \eqref{eq:action.condition} shows that the rest of the parameters need satisfy
$\theta'_{16}=\theta'_{24}=\theta'_{35}$, $\theta'_{15}=\theta'_{26}=\theta'_{34}$ and 
$\theta'_{14}=\theta'_{25}=\theta'_{36}$.

Since the subalgebra $A(\left(S^5\times S^5\right)_{\theta'})^{\text{co}H_\theta}$ of coinvariant elements is the same by Theorem 
\ref{theorem:cotransitive}, it amounts to computing the $SU(3)$ coinvariant elements of the diagonal $SU(3)$ coaction on $S^5\times S^5$.
Of course, they are not difficult to compute and they are generated by $\big\{1 , z_1z_4^*+z_2z_5^*+z_3z_6^* , z_1^*z_4+z_2^*z_5+z_3^*z_6\big\}$.

Let
\[
w=z_1z_4^*+z_2z_5^*+z_3z_6^*
\]
and $B$ is the algebra generated by $w$, $w^*$ and 1.
By using $\lambda_1=\theta_{14}'=\theta_{25}'=\theta_{36}'$,
$\quad e^{2\pi i\lambda_1}w^*=z_1^*z_4+z_2^*z_5+z_3^*z_6$.
To see that the generators 
$w$ and $w^*$ commute, it suffices to show that 
$z_jz_{j+3}^*$ and $z_{k+3}z_k^*$ commute for all $j,k=1,2,3$.
This requires the use of \eqref{eq:dependece.of.theta1} and 
\eqref{eq:dependece.of.theta2}. This is an easy computation.

For instance, 
\begin{align*}
z_3z_6^*z_5z_2^*
&=e^{-2\pi i\theta}					z_3z_5z_6^*z_2^*\\
&=e^{2\pi i(-\theta-\lambda_2)}		z_3z_5z_2^*z_6^*\\
&=e^{2\pi i(-\theta-\lambda_2+\lambda_3)}	z_5z_3z_2^*z_6^*\\
&=e^{2\pi i(-\theta-\lambda_2+\lambda_3-\theta)}	z_5z_2^*z_3z_6^*
\end{align*}
but from \eqref{eq:dependece.of.theta2}, $\lambda_2-\lambda_3=2\theta$.
Therefore,
$z_3z_6^*z_5z_2^*=z_5z_2^*z_3z_6^*$. Other computations are similar.
\end{proof}

\begin{remark}
In fact, this Theorem \ref{theorem:coaction.of.S5S5} shows that even in the classical case, $\theta=0$, the noncommutative manifold $(S^5\times S^5)_{\theta'}$ admits an action of the group $SU(3)$. In contrast, it was not possible for $SU(3)$ to act on $S^5_\theta$ unless $\theta$ were the zero matrix. 
\end{remark}

\begin{proposition}\label{prop:property.of.w}
The element $w$ in Theorem \ref{theorem:coaction.of.S5S5} satisfies 
the following commutation relations.
\begin{equation}
z_jw=e^{-2\pi i\lambda_1}wz_j\qquad 
\mathrm{and} 
\qquad 
z_jw^*=e^{2\pi i\lambda_1}w^*z_j
\end{equation}
\end{proposition}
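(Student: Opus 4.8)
The plan is to reduce the proposition to a finite family of one-step commutations among the six generators, and to show that the parameter constraints \eqref{eq:dependece.of.theta1}--\eqref{eq:dependece.of.theta2} force every one of them to produce the same unimodular factor $e^{-2\pi i\lambda_1}$. Write $w=\sum_{j=1}^{3}z_jz_{j+3}^{*}$ (recall $z_4=w_1,\ z_5=w_2,\ z_6=w_3$). Since $w$ is linear in the three monomials $z_jz_{j+3}^{*}$, it is enough to prove, for each generator index $k\in\{1,\dots,6\}$ and each $j\in\{1,2,3\}$,
\[
z_k\,z_jz_{j+3}^{*}=e^{-2\pi i\lambda_1}\,z_jz_{j+3}^{*}\,z_k ,
\]
and then sum over $j$. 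For the second relation I would use that $w^{*}=z_4z_1^{*}+z_5z_2^{*}+z_6z_3^{*}$ equals, up to a unimodular scalar, $z_1^{*}z_4+z_2^{*}z_5+z_3^{*}z_6$ (as recorded in the proof of Theorem \ref{theorem:coaction.of.S5S5}); a unimodular scalar does not affect a commutation relation, so it suffices to commute each $z_k$ past $z_j^{*}z_{j+3}$.

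The one-step computation is immediate: moving $z_k$ to the right through $z_jz_{j+3}^{*}$ via $z_jz_k=e^{2\pi i\theta'_{jk}}z_kz_j$ and $z_jz_k^{*}=e^{-2\pi i\theta'_{jk}}z_k^{*}z_j$ (together with $[z_\ell,z_\ell^{*}]=0$) accumulates precisely the phase $e^{2\pi i(\theta'_{kj}-\theta'_{k,j+3})}$, while moving $z_k$ past $z_j^{*}z_{j+3}$ accumulates the inverse phase $e^{2\pi i(\theta'_{k,j+3}-\theta'_{kj})}$. Hence the whole proposition reduces to the single numerical identity
\[
\theta'_{kj}-\theta'_{k,j+3}=-\lambda_1\qquad\text{for all } k\in\{1,\dots,6\},\ j\in\{1,2,3\}.
\]
Here one reads off from \eqref{eq:dependece.of.theta1} that the within-block parameters $\theta'_{ab}$ (both indices in $\{1,2,3\}$, or both in $\{4,5,6\}$) are $0$ or $\pm\theta$, whereas the cross-block part $(\theta'_{j\ell})_{1\le j\le3,\,4\le\ell\le6}$ is the circulant matrix with first row $(\lambda_1,\lambda_2,\lambda_3)$; in particular $\theta'_{j,j+3}=\lambda_1$ for every $j$. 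Splitting into the cases $k=j$, $k=j+3$, $k$ in the same block as $j$ but distinct from it, and $k$ in the opposite block to $j$ but distinct from $j+3$, the displayed identity is trivial in the first two cases and reduces in each of the remaining subcases to one of $\lambda_1-\lambda_2=\theta$ or $\lambda_3-\lambda_1=\theta$, i.e.\ to \eqref{eq:dependece.of.theta2}.

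Assembling the pieces then yields $z_kw=e^{-2\pi i\lambda_1}wz_k$ and $z_kw^{*}=e^{2\pi i\lambda_1}w^{*}z_k$ for every $k$. I do not expect a genuine obstacle here; the only real work is the bookkeeping in the case split, and the point worth highlighting is structural: the circulant form of the cross-block parameters together with the cyclic relations $\lambda_1-\lambda_2=\lambda_3-\lambda_1=\theta$ is exactly what makes all three summands of $w$, and all six generators, conspire to give the single phase $e^{-2\pi i\lambda_1}$.
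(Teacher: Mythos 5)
Your proposal is correct and follows essentially the same route as the paper: a direct computation commuting each generator past the monomials $z_jz_{j+3}^*$ and invoking the constraints \eqref{eq:dependece.of.theta1}--\eqref{eq:dependece.of.theta2} to equalize the resulting phases. You merely organize the bookkeeping more systematically, reducing everything to the single identity $\theta'_{kj}-\theta'_{k,j+3}=-\lambda_1$, where the paper instead carries out the representative case $z_1w$ explicitly and leaves the rest as ``similar.''
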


\begin{proof}
It can be proved by direct computations. For instance,
\begin{align*}
z_1w & = z_1z_1z_4^* + z_1z_2z_5^* + z_1z_3z_6^*\\
	 & = e^{-2\pi i\lambda_1}z_1z_4^*z_1 + 
     	 e^{-2\pi i(\theta+\lambda_2)}z_2z_5^*z_1 + 
         e^{-2\pi i(\lambda_3-\theta)}z_3z_6^*z_1\\
     & = e^{-2\pi i\lambda_1}wz_1.
\end{align*}
where we used \eqref{eq:dependece.of.theta1} in the second equality and \eqref{eq:dependece.of.theta2} in the third equality. 
Other computations are similar.
\end{proof}

\begin{corollary}
The elements $w$ and $w^*$ are central elements of 
$A(S^5\times W^5)_{\theta'}$.
\end{corollary}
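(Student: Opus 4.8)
The plan is to reduce the statement to Proposition~\ref{prop:property.of.w}. By construction $A(S^5\times S^5)_{\theta'}$ is generated as a unital $*$-algebra by $z_1,\dots,z_6$, hence as an algebra by the twelve elements $z_1,\dots,z_6$ and $z_1^*,\dots,z_6^*$; and since the commutant of a fixed element is always a unital subalgebra, it suffices to understand how $w$ and $w^*$ interact with each of these generators. First I would note that Proposition~\ref{prop:property.of.w}, whose proof spells out only the case $z_1$, goes through verbatim for every $z_j$ with $j=1,\dots,6$: one commutes the leading $z_j$ through the three summands of $w=z_1z_4^*+z_2z_5^*+z_3z_6^*$ using the relations \eqref{eq:dependece.of.theta1}, and the linear dependencies \eqref{eq:dependece.of.theta2} among $\theta,\lambda_1,\lambda_2,\lambda_3$ are exactly what force the three phases thus produced to agree, so that a single scalar factors out of the whole sum. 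Applying the involution to the two displayed identities of that proposition — using that $*$ is conjugate linear and reverses products — then gives the companion relations between $w,w^*$ and each $z_j^*$.

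At this stage $w$ and $w^*$ are seen to interact with all twelve generators through one fixed scalar, and it remains only to spread this from the generators to arbitrary elements: the set of $x$ with $xw=e^{-2\pi i\lambda_1}wx$ is a linear subspace, closed under products (the scalar being multiplicative) and under $*$, and it contains a generating set, hence equals the whole algebra; likewise for $w^*$. Consequently $w,w^*\in Z\big(A(S^5\times S^5)_{\theta'}\big)$ once the relevant scalar is trivial, $e^{2\pi i\lambda_1}=1$ — which is the regime (in particular the classical sub-case $\theta=0=\lambda_1$ of the Remark above, and more generally $\lambda_1\in\Z$) in which the Corollary is to be read.

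The step that carries the weight is therefore the phase bookkeeping in the extended form of Proposition~\ref{prop:property.of.w}: verifying both that the deformation scalar one collects when sliding $w$ past a degree-one generator is the same for all six of them — this is where \eqref{eq:dependece.of.theta1}–\eqref{eq:dependece.of.theta2} enter — and that this scalar is indeed trivial under the standing hypotheses. Everything else — the reduction to a generating set, the passage to the adjoint generators via $*$, and the propagation from generators to the whole algebra — is routine and requires no new input.
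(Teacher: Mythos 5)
The paper offers no argument for this corollary at all; it is stated as an immediate consequence of Proposition \ref{prop:property.of.w}, so you are in effect supplying the missing proof, and your mechanics are the right ones: reduce to the twelve generators $z_j,z_j^*$, check that the computation of Proposition \ref{prop:property.of.w} gives the \emph{same} phase $e^{-2\pi i\lambda_1}$ for all six $z_j$ (it does; e.g.\ for $z_4$ the three summands of $w$ contribute $e^{-2\pi i\lambda_1}$, $e^{-2\pi i(\lambda_3-\theta)}$ and $e^{-2\pi i(\lambda_2+\theta)}$, which coincide precisely by \eqref{eq:dependece.of.theta2}), and pass to the starred generators by applying the involution. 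You are also right — and this is the substantive point — that the corollary as printed cannot hold for generic parameters: Proposition \ref{prop:property.of.w} itself asserts $z_1w=e^{-2\pi i\lambda_1}wz_1$, so $w$ is central only when $e^{2\pi i\lambda_1}=1$. The paper states no such hypothesis, so your restriction to $\lambda_1\in\Z$ is a correction of the statement rather than a reading of it. The unconditional statement that the paper actually needs downstream (to slide $Q=(1-ww^*)^{-1}$ past the entries of the projections) is only that $ww^*$ is central, and that holds for every $\lambda_1$ because the two phases cancel: $z_j(ww^*)=e^{-2\pi i\lambda_1}wz_jw^*=e^{-2\pi i\lambda_1}e^{2\pi i\lambda_1}ww^*z_j=ww^*z_j$, and likewise for $z_j^*$.

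One step of your propagation argument is mis-stated, although it does no harm where you actually invoke it. For a fixed scalar $c=e^{-2\pi i\lambda_1}\neq1$, the set $\{x:xw=cwx\}$ is \emph{not} closed under products (a product of two such elements intertwines with $c^2$) and \emph{not} closed under $*$ (the adjoint relation involves $w^*$ and $\bar c$); indeed $z_j^*w=e^{+2\pi i\lambda_1}wz_j^*$, so this set does not even contain the starred generators. The clean formulation is that the algebra is graded by the difference between the numbers of unstarred and starred generators in a monomial, and $w$ commutes with a degree-$d$ element up to $c^{d}$. In the regime $c=1$, which is the only one in which you draw the conclusion, all of this collapses to the correct statement that the commutant of $\{w,w^*\}$ is a unital $*$-subalgebra containing a generating set; you should phrase the step that way.
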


Classically, since $SU(3)$ action on $S^5\times S^5$ is free and proper, the dimension of the base manifold $S^5\times S^5/SU(3)$ is 2. 
Therefore, the subalgebra $A\left(\left(S^5\times S^5\right)_{\theta'}\right)^{\text{co}\sut}$ of coinvariant elements is a noncommutative analogue of a 2-dimensional manifold (although it is 
a commutative algebra).

We now construct the instanton projection. Note that coequivariant
maps $\phi:\C^3\rightarrow C(S^5\times S^5)_{\theta'}$ are of the form 
\begin{align}
\begin{split}
&\phi(e_j)=z_j^*~,\quad j=1,2,3~~\mathrm{or}\\
&\phi(e_j)=z_j^*~,\quad j=4,5,6
\end{split}
\end{align}
and the 
coequivariant
maps $\phi:\left(\C^3\right)^*\rightarrow C(S^5\times S^5)_{\theta'}$ are of the form 
\begin{align}
\begin{split}
&\phi(e_j)=z_j~,\quad j=1,2,3~~\mathrm{or}\\
&\phi(e_j)=z_j~,\quad j=4,5,6
\end{split}
\end{align}

Set
$\vec{Z_1'}=(z_1,z_2,z_3)^T$ and $\vec{Z_2'}=(z_4,z_5,z_6)^T$. Note that, 
by using the relations in the algebra, these elements are unit vectors 
but they are not orthogonal. Instead, we will use 
\begin{align}
\begin{split}
&\vec{Z_1} = (z_1,z_2,z_3)^T\\
&\vec{Z_2} = 
(z_4 - e^{2\pi i\lambda_1}z_1w^*,
z_5 -  e^{2\pi i\lambda_1}z_2w^*,
z_6 -  e^{2\pi i\lambda_1}z_3w^*)^TQ
\end{split}\\
\begin{split}
&\vec{W_1} = (z_1^*,z_2^*,z_3^*)^T\\
&\vec{W_2} = 
(z_4^* -  e^{-2\pi i\lambda_1}wz_1^*,
z_5^*  -  e^{-2\pi i\lambda_1}wz_2^*,
z_6^*  -  e^{-2\pi i\lambda_1}wz_3^*)^TQ
\end{split}
\end{align}
where $Q=(1-ww^*)^{-1}$.
One can check that $\vec{Z_j}^*\vec{Z_k}=\delta_{jk}$

Let
\begin{align}
v=\begin{pmatrix}
\vec{Z_1} & \vec{Z_2}
\end{pmatrix}.
\end{align}
Then, $p^{(1,0)}=vv^*$ and $p^{(0,1)}=\overline{v}(\overline{v})^*$ are projections. Explicitly, 
the projections are given by 

\begin{align}
\scalemath{0.88}{p^{(1,0)}}&=
\scalemath{0.66}{
\begin{pmatrix}
z_1z_1^*+\left(z_4 - \lambda z_1w^*\right)Q^2\left(z_4^*-\bar{\lambda}wz_1^*\right) & z_1z_2^*+\left(z_4-\lambda z_1w^*\right)Q^2\left(z_5^*-\bar{\lambda}wz_2^*\right) & z_1z_3^*+\left(z_4-\lambda z_1w^*\right)Q^2\left(z_6^*-\bar{\lambda}wz_3^*\right)\\
z_2z_1^*+\left(z_5-\lambda z_2w^*\right)Q^2\left(z_4^*-\bar{\lambda}wz_1^*\right) & z_2z_2^*+\left(z_5-\lambda z_2w^*\right)Q^2\left(z_5^*-\bar{\lambda}wz_2^*\right) & z_2z_3^*+\left(z_5-\lambda z_2w^*\right)Q^2\left(z_6^*-\bar{\lambda}wz_3^*\right)\\
z_3z_1^*+\left(z_6-\lambda z_3w^*\right)Q^2\left(z_4^*-\bar{\lambda}wz_1^*\right) & z_3z_2^*+\left(z_6-\lambda z_3w^*\right)Q^2\left(z_5^*-\bar{\lambda}wz_2^*\right) & z_3z_3^*+\left(z_6-\lambda z_3w^*\right)Q^2\left(z_6^*-\bar{\lambda}wz_3^*\right)
\end{pmatrix}}\\
\nonumber&=
\scalemath{0.7}{
\begin{pmatrix}
z_1z_1^*+\left(z_4-\lambda z_1w^*\right)\left(z_4^*-\bar{\lambda}wz_1^*\right)Q^2 & z_1z_2^*+\left(z_4-\lambda z_1w^*\right)\left(z_5^*-\bar{\lambda}wz_2^*\right)Q^2 & z_1z_3^*+\left(z_4-\lambda z_1w^*\right)\left(z_6^*-\bar{\lambda}wz_3^*\right)Q^2\\
z_2z_1^*+\left(z_5-\lambda z_2w^*\right)\left(z_4^*-\bar{\lambda}wz_1^*\right)Q^2 & z_2z_2^*+\left(z_5-\lambda z_2w^*\right)\left(z_5^*-\bar{\lambda}wz_2^*\right)Q^2 & z_2z_3^*+\left(z_5-\lambda z_2w^*\right)\left(z_6^*-\bar{\lambda}wz_3^*\right)Q^2\\
z_3z_1^*+\left(z_6-\lambda z_3w^*\right)\left(z_4^*-\bar{\lambda}wz_1^*\right)Q^2 & z_3z_2^*+\left(z_6-\lambda z_3w^*\right)\left(z_5^*-\bar{\lambda}wz_2^*\right)Q^2 & z_3z_3^*+\left(z_6-\lambda z_3w^*\right)\left(z_6^*-\bar{\lambda}wz_3^*\right)Q^2
\end{pmatrix}
}
\\
\nonumber\\
&\mathrm{and}\nonumber\\
\nonumber\\
\scalemath{0.88}{p^{(0,1)}}&=
\scalemath{0.7}{
\begin{pmatrix}
z_1^*z_1+Q\left(z_4^*-\bar{\lambda}wz_1^*\right)\left(z_4-\lambda z_1w^*\right)Q & z_1^*z_2+Q\left(z_4^*-\bar{\lambda}wz_1^*\right)\left(z_5-\lambda z_2w^*\right)Q & z_1^*z_3+Q\left(z_4^*-\bar{\lambda}wz_1^*\right)\left(z_6-\lambda z_3w^*\right)Q\\
z_2^*z_1+Q\left(z_5^*-\bar{\lambda}wz_2^*\right)\left(z_4-\lambda z_1w^*\right)Q & z_2^*z_2+Q\left(z_5^*-\bar{\lambda}wz_2^*\right)\left(z_5-\lambda z_2w^*\right)Q & z_2^*z_3+Q\left(z_5^*-\bar{\lambda}wz_2^*\right)\left(z_6-\lambda z_3w^*\right)Q\\
z_3^*z_1+Q\left(z_6^*-\bar{\lambda}wz_3^*\right)\left(z_4-\lambda z_1w^*\right)Q & z_3^*z_2+Q\left(z_6^*-\bar{\lambda}wz_3^*\right)\left(z_5-\lambda z_2w^*\right)Q & z_3^*z_3+Q\left(z_6^*-\bar{\lambda}wz_3^*\right)\left(z_6-\lambda z_3w^*\right)Q
\end{pmatrix}}
\\\nonumber
&=
\scalemath{0.7}{
\begin{pmatrix}
z_1^*z_1+\left(z_4^*-\bar{\lambda}wz_1^*\right)\left(z_4-\lambda z_1w^*\right)Q^2 & z_1^*z_2+\left(z_4^*-\bar{\lambda}wz_1^*\right)\left(z_5-\lambda z_2w^*\right)Q^2 & z_1^*z_3+\left(z_4^*-\bar{\lambda}wz_1^*\right)\left(z_6-\lambda z_3w^*\right)Q^2\\
z_2^*z_1+\left(z_5^*-\bar{\lambda}wz_2^*\right)\left(z_4-\lambda z_1w^*\right)Q^2 & z_2^*z_2+\left(z_5^*-\bar{\lambda}wz_2^*\right)\left(z_5-\lambda z_2w^*\right)Q^2 & z_2^*z_3+\left(z_5^*-\bar{\lambda}wz_2^*\right)\left(z_6-\lambda z_3w^*\right)Q^2\\
z_3^*z_1+\left(z_6^*-\bar{\lambda}wz_3^*\right)\left(z_4-\lambda z_1w^*\right)Q^2 & z_3^*z_2+\left(z_6^*-\bar{\lambda}wz_3^*\right)\left(z_5-\lambda z_2w^*\right)Q^2 & z_3^*z_3+\left(z_6^*-\bar{\lambda}wz_3^*\right)\left(z_6-\lambda z_3w^*\right)Q^2
\end{pmatrix}}
\end{align}
where $\lambda=e^{-2\pi i\lambda_1}$ and the second equalities for these projections follow from the corollary of Proposition \ref{prop:property.of.w}.

We denote the images of $p^{(1,0)}$ and $p^{(0,1)}$ on $B^3$ by $\E^{(1,0)}:=p^{(1,0)}B^3$ and $\E^{(0,1)}:=p^{(0,1)}B^3$, respectively. 
The irreducible representations $V^{(n,m)}$ of $SU(3)$ are labeled by a pair 
of positive integers $(n,m)$ and are given by
$$
V^{(n,m)}\subset\sym^nV^{(1,0)}\otimes\sym^mV^{(0,1)}
$$
where $V^{(1,0)}\cong\C^3$ is the left regular representation of $SU(3)$ and $V^{(0,1)}\cong(\C^3)^*$ is the dual representation of $V^{(1,0)}$. In general the above containment is  strict and the dimension of $V^{(n,m)}$ is given by $\frac12(n+1)(m+1)(n+m+2)$. 
Clearly, $\E^{(1,0)}$ is a right $B$-module.

More generally, one can define the right $B$-module 
$\E^{(n,m)}$ associated with
any irreducible representation $V^{(n,m)}$ on $SU(3)$, for a pair of
positive integers $(n,m)$.
Set
\[
\vec{U_{p,s}}=
\sym\left(\vec{Z_1}^{\otimes(n-p+1)}\otimes\vec{Z_2}^{\otimes(p-1)}\right)\otimes
\sym\left(\vec{W_1}^{\otimes(m-s+1)}\otimes\vec{W_2}^{\otimes(s-1)}\right)
c_{p,s}
\]
where $c_{p,s}$ is a normalizing element. These vectors are already orthogonal. With $c_{p,s}=
\sqrt{\begin{pmatrix}
n\\p
\end{pmatrix}
\begin{pmatrix}
m\\s
\end{pmatrix}},
$
it is normalized. Then,

\begin{align}
p^{(n,m)}=\sum_{p=1}^n\sum_{s=1}^m\vec{U_{p,s}}\vec{U_{p,s}}^*
\end{align}
defines a projection.

One associates to each projection $p^{(n,m)}$ a (Grassmannian) connection on each module

\[
\nabla:=p^{(n,m)}\circ d:\E^{(n,m)}\longrightarrow
\E^{(n,m)}\otimes_B\Omega^1(B)
\]
where $\Omega(B)$ is the differential calculus in the sense of \cite{CD2002}.

The first Chern characters of the projections $p^{(1,0)}$ and $p^{(0,1)}$ can be computed as follows.

\begin{align}
\begin{split}
\ch_{0}\left(p^{(1,0)}\right):&=\tr(p)\\
& = z_1z_1^* + (z_4-\lambda z_1w^*)(z_4^*-\bar{\lambda}wz_1^*)Q^2\\ 
&\qquad +z_2z_2^* - (z_5-\lambda z_2w^*)(z_5^*-\bar{\lambda}wz_2^*)Q^2 + z_3z_3^* - (z_6-\lambda z_3w^*)(z_6^*-\bar{\lambda}wz_3^*)Q^2\\
& = 1 + \left((z_4z_4^* - z_4\bar{\lambda}wz_1^* - (z_4^*-\bar{\lambda}wz_1^*)) + (z_5-\lambda z_2w^*)(z_5^*-\bar{\lambda}wz_2^*) + (z_6-\lambda z_3w^*)(z_6^*-\bar{\lambda}wz_3^*)\right)Q^2\\
& = 1 + (z_4-\lambda z_1w^*)(z_4^*-\bar{\lambda}wz_1^*)Q - (z_5-\lambda z_2w^*)(z_5^*-\bar{\lambda}wz_2^*) - (z_6-\lambda z_3w^*)(z_6^*-\bar{\lambda}wz_3^*)Q^2\\
&=2
\end{split}
\end{align}
and likewise $\ch_{0}\left(p^{(1,0)}\right)=2$ as well.

Moreover, the curvature of $\nabla$ associated to $p^{(1,0)}$ and $p^{(0,1)}$ can be computed readily.
\begin{proposition}
The components of the curvature $F=\nabla^2$ of the connection corresponding to the projections $p^{(1,0)}$ are given respectively by 
\begin{align}
\begin{split}
(F^{(1,0)})^1
\begin{pmatrix}
a\\b\\c
\end{pmatrix}
=&(z_1z_1^*+(z_4-\lambda z_1w^*)Q^2(z_4^*-\bar{\lambda}wz_1^*))(d((z_1z_1^*+(z_4-\lambda z_1w^*)Q^2(z_4^*-\bar{\lambda}wz_1^*))da 
\\
& + (z_1z_2^*+(z_4-\lambda z_1w^*)Q^2(z_5^*-\bar{\lambda}wz_2^*))db+(z_1z_3^*+(z_4-\lambda z_1w^*)Q^2(z_6^*-\bar{\lambda}wz_3^*))dc)) 
\\
&+ z_1z_2^*+((z_4-\lambda z_1w^*)Q^2(z_5^*-\bar{\lambda}wz_2^*))(d((z_2z_1^*+(z_5-\lambda z_2w^*)Q^2(z_4^*-\bar{\lambda}wz_1^*))da 
\\
& + (z_2z_2^*+(z_5-\lambda z_2w^*)Q^2(z_5^*-\bar{\lambda}wz_2^*))db + (z_2z_3^* + (z_5-\lambda z_2w^*)Q^2(z_6^*-\bar{\lambda}wz_3^*))dc)) 
\\
& + (z_1z_3^*+(z_4-\lambda z_1w^*)Q^2(z_6^*-\bar{\lambda}wz_3^*))(d((z_3z_1^* + (z_6-\lambda z_3w^*)Q^2(z_4^*-\bar{\lambda}wz_1^*))da 
\\
& + (z_3z_2^* + (z_6-\lambda z_3w^*)Q^2(z_5^*-\bar{\lambda}wz_2^*))db+(z_3z_3^* + (z_6-\lambda z_3w^*)Q^2(z_6^*-\bar{\lambda}wz_3^*))))\\
(F^{(1,0)})^2
\begin{pmatrix}
a\\b\\c
\end{pmatrix}
=&(z_2z_1^*+(z_5-\lambda z_2w^*)Q^2(z_4^*-\bar{\lambda}wz_1^*))(d((z_1z_1^*+(z_4-\lambda z_1w^*)Q^2(z_4^*-\bar{\lambda}wz_1^*))da 
\\
&+ (z_1z_2^*+(z_4-\lambda z_1w^*)Q^2(z_5^*-\bar{\lambda}wz_2^*))db+(z_1z_3^*+(z_4-\lambda z_1w^*)Q^2(z_6^*-\bar{\lambda}wz_3^*))dc))
\\
& + z_2z_2^*+((z_5-\lambda z_2w^*)Q^2(z_5^*-\bar{\lambda}wz_2^*))(d((z_2z_1^*+(z_5-\lambda z_2w^*)Q^2(z_4^* - \bar{\lambda}wz_1^*))da 
\\
& + (z_2z_2^* + (z_5-\lambda z_2w^*)Q^2(z_5^*-\bar{\lambda}wz_2^*))db + (z_2z_3^* + (z_5-\lambda z_2w^*)Q^2(z_6^* - \bar{\lambda}wz_3^*))dc)) 
\\
& + (z_2z_3^*+d(z_5-\lambda z_2w^*)Q^2(z_6^*-\bar{\lambda}wz_3^*))(d((z_3z_1^*+(z_6-\lambda z_3w^*)Q^2(z_4^* - \bar{\lambda}wz_1^*))da 
\\
& + (z_3z_2^* + (z_6-\lambda z_3w^*)Q^2(z_5^*-\bar{\lambda}wz_2^*))db + (z_3z_3^* + (z_6-\lambda z_3w^*)Q^2(z_6^*-\bar{\lambda}wz_3^*))))
\\
(F^{(1,0)})^3
\begin{pmatrix}
a\\b\\c
\end{pmatrix}
=&
(z_3z_1^*+(z_6-\lambda z_3w^*)Q^2(z_4^*-\bar{\lambda}wz_1^*))(d((z_1z_1^* + (z_4-\lambda z_1w^*)Q^2(z_4^* - \bar{\lambda}wz_1^*))da 
\\
& + (z_1z_2^* + (z_4-\lambda z_1w^*)Q^2(z_5^*-\bar{\lambda}wz_2^*))db + (z_1z_3^* + (z_4-\lambda z_1w^*)Q^2(z_6^*-\bar{\lambda}wz_3^*))dc)) 
\\
& + z_3z_2^*+((z_6-\lambda z_3w^*)Q^2(z_5^*-\bar{\lambda}wz_2^*))(d((z_2z_1^*+(z_5-\lambda z_2w^*)Q^2(z_4^*-\bar{\lambda}wz_1^*))da 
\\
& + (z_2z_2^*+(z_5-\lambda z_2w^*)Q^2(z_5^*-\bar{\lambda}wz_2^*))db+(z_2z_3^*+(z_5-\lambda z_2w^*)Q^2(z_6^*-\bar{\lambda}wz_3^*))dc)) 
\\
& +(z_3z_3^*+(z_6-\lambda z_3w^*)Q^2(z_6^*-\bar{\lambda}wz_3^*))(d((z_3z_1^* + (z_6-\lambda z_3w^*)Q^2(z_4^*-\bar{\lambda}wz_1^*))da 
\\
& + (z_3z_2^* + (z_6-\lambda z_3w^*)Q^2(z_5^*-\bar{\lambda}wz_2^*))db + (z_3z_3^*+(z_6-\lambda z_3w^*)Q^2(z_6^*-\bar{\lambda}wz_3^*))))
\end{split}
\end{align}
\end{proposition}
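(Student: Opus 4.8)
The plan is to read off the curvature from the standard description of the Grassmannian connection. Since $\nabla=p^{(1,0)}\circ d$ on $\E^{(1,0)}=p^{(1,0)}B^3$, the Leibniz identity $\nabla(\xi b)=(\nabla\xi)b+\xi\otimes db$ for $\xi\in\E^{(1,0)}$, $b\in B$, is immediate from $p^{(1,0)}\xi=\xi$, so $\nabla$ is a genuine connection. Extending $d$ componentwise to $B^3\otimes_B\Omega(B)$ and using $d^2=0$ together with the graded Leibniz rule on $\Omega(B)$ gives $F=\nabla^2=p^{(1,0)}\circ d\circ p^{(1,0)}\circ d$ as a map $\E^{(1,0)}\to\E^{(1,0)}\otimes_B\Omega^2(B)$. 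That $F$ is $B$-linear --- hence genuinely an $\mathrm{End}_B(\E^{(1,0)})$-valued $2$-form --- follows from the identity $p^{(1,0)}\,(dp^{(1,0)})\,p^{(1,0)}=0$, obtained by differentiating $(p^{(1,0)})^2=p^{(1,0)}$.

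First I would record the matrix entries of $p^{(1,0)}$ in the second of the two forms already displayed, namely $p_{ij}=z_iz_j^*+(z_{i+3}-\lambda z_iw^*)(z_{j+3}^*-\bar{\lambda}wz_j^*)Q^2$ with $\lambda=e^{-2\pi i\lambda_1}$ and $Q=(1-ww^*)^{-1}$; keeping $Q^2$ on the right is legitimate since $w,w^*$, and hence $Q$, are central by the Corollary following Proposition \ref{prop:property.of.w}. Then, writing $\xi=(a,b,c)^T$ and unfolding $F=p^{(1,0)}\circ d\circ p^{(1,0)}\circ d$, the $i$-th component of $F\xi$ is $\sum_{j=1}^3 p_{ij}\,d\Big(\sum_{k=1}^3 p_{jk}\,d\xi_k\Big)$, and substituting the entries $p_{ij}$ produces verbatim the three displayed expressions for $(F^{(1,0)})^i$. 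No cancellation is attempted: the statement is the unreduced matrix product, so the proof is a transcription rather than a derivation.

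The only substantive point --- and the main obstacle --- is bookkeeping: keeping straight the noncommutative ordering of the six generators and of $Q$, getting the graded signs right in the Leibniz rule applied to the bimodule-valued $1$-forms $dp_{jk}$, and transcribing the three rows of the $3\times3$ product correctly. I would sanity-check the outcome against the already-computed trace identity $\ch_0(p^{(1,0)})=2$ and verify that each $(F^{(1,0)})^i$ lies in $\E^{(1,0)}\otimes_B\Omega^2(B)$, i.e. is fixed on the left by $p^{(1,0)}$, which once more reduces to $p^{(1,0)}\,dp^{(1,0)}\,p^{(1,0)}=0$.
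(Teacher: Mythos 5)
Your proposal is correct and coincides with the computation the paper leaves implicit: the paper states this proposition without proof (``the curvature \ldots can be computed readily''), and the displayed entries are exactly the unreduced expansion $(F\xi)_i=\sum_{j}p_{ij}\,d\bigl(\sum_{k}p_{jk}\,d\xi_k\bigr)$ of $F=p^{(1,0)}\circ d\circ p^{(1,0)}\circ d$ that you describe, with the centrality of $Q$ justifying the chosen ordering. Your auxiliary observations (the Leibniz property of $\nabla$, right $B$-linearity of $F$ via $p\,(dp)\,p=0$) are standard and correct, so the proof is essentially the same transcription the author intends.
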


\section*{Acknowledgment}

\noindent The author would like to thank Piotr Hajac for organizing "Noncommutative geometry for the next generation conference" and engaging 
in useful discussions. The author also extends appreciation to Giovanni Landi for helpful exchange of ideas in Shanghai.

\end{document}